\documentclass[reqno,12pt,letterpaper]{amsart}
\usepackage{amssymb,oldlfont}
\usepackage{amsmath}
\usepackage{mathrsfs}
\usepackage{hyperref}

\usepackage[draft]{changes}
\usepackage{color}
\def\wrtext#1{\relax\ifmmode{\leavevmode\hbox{#1}}\else{#1}\fi}

\def\begeq{\begin{equation}}
	\def\endeq{\end{equation}}

\let\epsilon=\varepsilon

\def\part#1{\frac{\partial}{\partial #1}}

\newcommand{\Op}{\wrtext{Op}}

\newcommand{\DD}{{\mathcal D}}
\newcommand{\HH}{{\mathcal H}}
\newcommand{\LL}{{\mathcal L}}
\newcommand{\OO}{{\mathcal O}}
\newcommand{\RR}{{\mathbb R}}

\newcommand{\GG}{{\mathcal G}}
\newcommand{\NN}{{\mathbb N}}
\newcommand{\CC}{{\mathbb C}}

\newcommand{\CI}{{{C}^\infty}}
\newcommand{\CIb}{{{C}^\infty_b}}

\newcommand{\CIc}{{{C}^\infty_{0}}}

\newcommand{\Spec}{\mbox{\rm Spec\,}}

\renewcommand{\exp}{\mbox{\rm exp\,}}
\newcommand{\supp}{\mbox{\rm supp}}

\def\Op{{\rm Op}}

\newtheorem{defi}{Definition}
\newtheorem{theo}{Theorem}
\newtheorem{lemma}[defi]{Lemma}

\newtheorem{remark}[defi]{Remark}
\newtheorem{coro}[defi]{Corollary}
\numberwithin{equation}{section}
\numberwithin{defi}{section}

\title{Symbol calculus for semiclassical Gevrey operators}
\author{Haoren Xiong}
\email{h.xiong@bham.ac.uk}
\address{School of Mathematics, University of Birmingham, Birmingham B15 2TS, United Kingdom}

\begin{document}
	
	\begin{abstract}
		We construct a Banach algebra of Gevrey asymptotic symbols that is stable under the semiclassical quantization product and elliptic inversion (microlocal parametrix), and derive sharp quantitative estimates for the coefficients of the full asymptotic inverse. As an application, we obtain exponential estimates for adiabatic projectors in the Gevrey setting.  
	\end{abstract}
	
	\maketitle
	
	\section{Introduction}
	
	In this work, we study the symbol calculus for Gevrey semiclassical pseudodifferential operators. Semiclassical pseudodifferential operators are used to study differential equations with a small parameter $h>0$, capturing high-frequency and asymptotic behavior of solutions. Let $a\in \CIb(\RR^{2d})$ (smooth bounded functions with all derivatives bounded) be a symbol on the phase space $T^*\RR^d\cong \RR_x^d\times\RR_\xi^d$. We associate to it an semiclassical pseudodifferential operator through (standard) quantization:
	\begin{equation*}
	\begin{gathered}
		\Op_h(a) : L^2(\RR^d)\to L^2(\RR^d),\\
		 (\Op_h(a)u)(x) = \frac{1}{(2\pi h)^d} \iint_{\RR^{2d}} e^{i(x-y)\cdot\xi/h} a(x,\xi;h) u(y)\,dy\,d\xi.
	\end{gathered}
	\end{equation*}
	A cornerstone of the theory of pseudodifferential operators is that composition of such operators correspond to so-called {\textit{semiclassical product (symbol calculus)}}, denoted by $\sharp$, of the corresponding symbols, that is, 
	\begin{equation*}
		\Op_h(a)\circ\Op_h(b) = \Op_h(a\sharp b).
	\end{equation*}
	Thanks to the method of stationary phase, semiclassical products $a\sharp b$ admit an asymptotic expansion in $h$:
	\begin{equation*}
		a\sharp b \sim \sum_{\alpha\in\NN^d} \frac{h^{|\alpha|}}{i^{|\alpha|} \alpha!} \frac{\partial^\alpha a}{\partial\xi^\alpha} \frac{\partial^\alpha b}{\partial x^\alpha}.
	\end{equation*}
	We note that a central result of the semiclassical symbol calculus is the construction of an (asymptotic) elliptic parametrix: if a symbol $a$ is elliptic, i.e. its principal symbol $a_0(x,\xi)$ is nonvanishing, then one can construct a symbol $b(x,\xi;h)\sim \sum_k h^k b_k$ via asymptotic expansion such that $a\sharp b = b\sharp a =1$ modulo a negligible error. 
	
	In the classical $\CI$ setting, namely when symbols are $\CI$, the symbol calculus produces an asymptotic inverse for elliptic symbols modulo $\OO(h^\infty)$ errors. This is key to many fundamental results such as elliptic regularity, propagation of singularities, and semiclassical spectral asymptotics. In the analytic setting, developed notably by Sj\"ostrand, the symbol class satisfies factorial bounds, and the parametrix construction yields much stronger control: the remainder can be made exponentially small in $h$. This refinement plays a crucial role in problems requiring exponentially precise estimates, including quantum tunneling phenomena, resonance theory, and adiabatic theory.
	
	A natural direction is to develop symbol calculus in the Gevrey class, which provides an intermediate framework between the $\CI$ and analytic settings. Gevrey regularity allows one to go beyond the smooth theory and obtain more precise estimates on the asymptotic symbols and remainders, while retaining useful tools such as cutoffs and partitions of unity that are unavailable in the analytic setting, which makes the Gevrey regularity setting often better suited to physical models. 
	
	Let $s\geq 1$ and let $U\subset \RR^d$ be open. The Gevrey-$s$ class, denoted by $\GG^s(U)$, consists of all functions $a\in\CI(U)$ such that for every compact $K\subset U$, $\exists C,\,R>0$ s.t. 
	\begin{equation*}
		\forall\alpha\in\NN^d,\ x\in K,\quad |\partial^\alpha a(x) | \leq C R^{|\alpha|} \alpha!^s.
	\end{equation*}
	We note that when $s=1$, $\GG^1(U)$ coincides with the class of real analytic functions on $U$; while for $s>1$, the Gevrey-$s$ class is non-quasianalytic, in the sense that it contains nontrivial compactly supported functions.
	
	There are various natural examples of Gevrey functions. A classical one is the fundamental solution of the heat equation $\partial_t u-\Delta u=0$ on $\RR_t\times \RR_x^n$ given by
	\begin{equation*}
		E(t,x) = \begin{cases}
			(4\pi t)^{-n/2} \exp(-|x|^2/4t),\quad\text{if }t>0,\\
			0,\quad\text{if }t\leq 0,
		\end{cases}
	\end{equation*}
	which belongs to $\GG^2(\RR^{n+1})$. Gevrey regularity appears naturally also in wave propagation: Lebeau \cite{Lebeau} proved that diffracted waves produced by scattering off obstacles are microlocally of Gevrey class $\GG^3$, even when the obstacle is analytic and non-trapping.
	
	Let us introduce (anisotropic) Gevrey symbols on phase space. Let $\Omega\subset\RR^{2n}$ be an open set. Let $s,\,\sigma\geq 1$. We say that $a\in\CI(\Omega)$ is of $\GG_x^s \GG_\xi^\sigma$ class, denoted by $a\in\GG_x^s \GG_\xi^\sigma(\Omega)$, if for every compact $K\subset\Omega$, there exist $C,\,R>0$ such that
	\begin{equation}\label{Gevrey definition}
		\sup_{(x,\xi)\in K} |\partial_x^\alpha \partial_\xi^\beta a(x,\xi)| \leq C R^{|\alpha|+|\beta|} \alpha!^s \beta!^\sigma \quad \mbox{for any }\alpha,\,\beta \in\NN^{n}.
	\end{equation} 
	
	We then define the formal (asymptotic) Gevrey symbol classes: 
	
	\begin{defi}
		Let $s\geq 1$ and let $\{p_k\}_{k=0}^\infty$ be a sequence of functions in $\GG_x^s\GG_\xi^\sigma(\Omega)$. We say that  $\{p_k\}_{k=0}^\infty$ is a $\GG^{s,\sigma}$ sequence on $\Omega$ if for every compact $K\subset\Omega$, there exist $C,\,R>0$ such that for any $\alpha,\,\beta \in\NN^{n}$ and $k\in\NN$,
		\begin{equation}\label{eq:Gevrey symbol defn}
			\sup_{(x,\xi)\in K} |\partial_x^\alpha \partial_\xi^\beta p_k(x,\xi)| \leq C R^{|\alpha|+|\beta|+k} \alpha!^s \beta!^\sigma k!^{s+\sigma-1} .
		\end{equation} 
		Associated with a $\GG^{s,\sigma}$ sequence $\{p_k\}_{k=0}^\infty$ we can define a formal $\GG^{s,\sigma}$ symbol $p(x,\xi;h) = \sum_{k=0}^\infty h^k p_k(x,\xi)$ on $\Omega$ by the (optimal) representatives/realizations, 
		\begin{equation}
		\label{eq:optimal realization}
			p_{\Omega'}(x,\xi;h) = \sum_{k=0}^{\lfloor(Rh)^{-1/(s+\sigma-1)}\rfloor} h^k p_k(x,\xi),\quad (x,\xi)\in \Omega'\Subset \Omega,
		\end{equation}
		where $R=R_{\Omega'}$ is given as in \eqref{eq:Gevrey symbol defn} with $K=\overline{\Omega'}$. We note by \cite[Proposition 2.4]{XiongXu2024} that local representatives are equal modulo a $\GG^{s,\sigma}$-small remainder in the sense that
		\begin{equation*}
			\sup_{(x,\xi)\in\Omega_1'\cap\Omega_2'}|\partial_x^\alpha \partial_\xi^\beta (p_{\Omega_1'} - p_{\Omega_2'})| \leq C^{1+|\alpha|+|\beta|}\alpha!^s \beta!^\sigma \exp(-C^{-1}h^{-1/(s+\sigma-1)})
		\end{equation*}
		holds with a constant $C>0$ uniformly for all $\alpha,\,\beta\in\NN^n$. Here $\Omega_1'$ and $\Omega_2'$ are precompact open subsets of $\Omega$ such that $\Omega_1'\cap \Omega_2'\neq \emptyset$.
	\end{defi}
	
	The following is the main result of this work:
	
	\begin{theo}\label{thm:main}
		Let $s,\,\sigma\geq 1$, and let $\Omega\subset\RR^{2n}$ be open. Let $p=\sum_{k=0}^{\infty} h^k p_k$ be a formal $\GG^{s,\sigma}$ symbol on $\Omega$. Suppose that $p$ is elliptic in the sense that $p_0\neq 0$ everywhere on $\Omega$. Then there is a unique formal $\GG^{s,\sigma}$ symbol, $q=\sum_{k=0}^{\infty} h^k q_k$, such that $p\sharp q \sim q\sharp p \sim 1$.
	\end{theo}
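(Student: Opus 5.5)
The inverse is forced by formal algebra; the work is the Gevrey bounds. Write the formal product as $p\sharp q=\sum_{N\ge0}h^N\sum_{k+l+|\alpha|=N}\frac{1}{i^{|\alpha|}\alpha!}\partial_\xi^\alpha p_k\,\partial_x^\alpha q_l$. Requiring $p\sharp q\sim1$ order by order gives $q_0=1/p_0$ and, for $N\ge1$,
\begin{equation*}
q_N=-\frac{1}{p_0}\sum_{\substack{k+l+|\alpha|=N\\ l<N}}\frac{1}{i^{|\alpha|}\alpha!}\,\partial_\xi^\alpha p_k\,\partial_x^\alpha q_l .
\end{equation*}
This recursion determines $q$ uniquely. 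The same construction on the other side gives a left inverse $\tilde q$. Associativity of the formal product then yields $\tilde q=\tilde q\sharp(p\sharp q)=(\tilde q\sharp p)\sharp q=q$, so $q$ is a two-sided inverse. Associativity holds at the level of formal series, since each coefficient involves only finitely many terms. Uniqueness follows the same way: any right inverse equals $\tilde q$.

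The main step is to show that $\{q_k\}$ is a $\GG^{s,\sigma}$ sequence, i.e. that it satisfies \eqref{eq:Gevrey symbol defn} on each compact $K\subset\Omega$. Fix $K$ and a slightly larger compact neighbourhood $K'$. Since $p_0\neq0$ and $p_0\in\GG^s_x\GG^\sigma_\xi$, standard Gevrey stability under composition with the analytic map $z\mapsto1/z$ (Faà di Bruno) shows that $1/p_0$ satisfies Gevrey-type bounds on $K'$.

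Rather than estimate the recursion directly, which loses factorials through the derivatives $\partial_x^\alpha q_l$, I will use a formal norm argument in the style of Boutet de Monvel–Krée and Sjöstrand. For $0<\rho$ small, define a weighted norm on formal symbols,
\begin{equation*}
\|a\|_\rho=\sum_{k,\alpha,\beta}\frac{\rho^{|\alpha|+|\beta|+k}}{\alpha!^s\beta!^\sigma k!^{s+\sigma-1}}\sup_{K_\rho}|\partial_x^\alpha\partial_\xi^\beta a_k|,
\end{equation*}
possibly with extra weights like $(|\alpha|+|\beta|+k)!^{-\epsilon}$, or with $\rho$ varying in a scale of shrinking compact sets, to absorb the combinatorics. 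The algebra estimate $\|a\sharp b\|_\rho\le C\|a\|_\rho\|b\|_\rho$ should hold. The key point is that the term $h^{|\alpha|}\partial_\xi^\alpha\partial_x^\alpha$ costs $\alpha!^{s+\sigma}/\alpha!$, which matches $|\alpha|!^{s+\sigma-1}$ in the $h$-index. This is exactly why the exponent is $s+\sigma-1$. I would verify it with Leibniz and the inequality $\alpha!\beta!\le(\alpha+\beta)!$.

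Given the algebra estimate, write $p=p_0(1-r)$ with $r=1-p_0^{-1}\sharp p$. Here $r$ has no $h^0$ term, so after rescaling $h$ (equivalently, shrinking $\rho$) we get $\|r\|_\rho<1/2$. Then
\begin{equation*}
q=\Big(\sum_{j\ge0}r^{\sharp j}\Big)\sharp p_0^{-1},
\end{equation*}
where the Neumann series converges in the Banach algebra. Finiteness of $\|q\|_\rho$ gives \eqref{eq:Gevrey symbol defn} with $R\sim\rho^{-1}$. The series is also formally consistent with the recursion above, because $r^{\sharp j}=\OO(h^j)$, so each coefficient receives only finitely many contributions.

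The main obstacle is the Banach algebra estimate. In particular, I need to show that $\|r\|_\rho\to0$ as $\rho\to0$ for a symbol without $h^0$ part. Its $h^1$ coefficient includes $\partial_\xi p_0\,\partial_x(1/p_0)$-type terms, and these are small only because of the factor $\rho^{k+|\alpha|}$. I would try the weighted sum norm first. If factorial bookkeeping fails, for instance because derivatives fall on $1/p_0$ near the boundary of $K$, I would use nested compacts $K_t$ and Cauchy–Nagumo-type estimates $\|\partial a\|_{t'}\lesssim(t-t')^{-s}\|a\|_t$. That argument is a Gevrey analogue of Sjöstrand's analytic-symbol proof.
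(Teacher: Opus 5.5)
Your overall architecture is the same as the paper's: the recursion for the formal inverse, two-sidedness and uniqueness via associativity, and then a Neumann series $\sum_j r^{\sharp j}$ in a Banach algebra of formal symbols. The algebraic parts are fine. The gap is the norm. For the fixed-weight norm $\|a\|_\rho$ you write down, the estimate $\|a\sharp b\|_\rho\le C\|a\|_\rho\|b\|_\rho$ is false. The failure is intrinsic (loss of derivatives), not a boundary effect on $K$.

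To see this, take $n=1$, $a=a_0(\xi)$, $b=b_0(x)$ and $K=K_x\times K_\xi$. Then $(a\sharp b)_N=\frac{1}{i^NN!}a_0^{(N)}(\xi)\,b_0^{(N)}(x)$. The $(N,\alpha,\beta)$ term of $\|a\sharp b\|_\rho$ equals the $a$-term of index $N+\beta$ times the $b$-term of index $N+\alpha$, times
\[
\rho^{-N}\binom{N+\alpha}{N}^{s}\binom{N+\beta}{N}^{\sigma}.
\]
For $a=\xi$ this already gives $\|\xi\sharp b\|_\rho\ge\sum_{j\ge1} j^s c_j$ with $c_j=\rho^j\sup_K|b_0^{(j)}|/j!^s$, while $\|b\|_\rho=\sum_jc_j$. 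So any $b_0$ with $\sum_jc_j<\infty=\sum_jj^sc_j$ is a counterexample: $hD_x$ does not act boundedly at a fixed weight.

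Moreover, a single $\rho$ weighs both the $h$-order and the derivatives. The factor $h^{|\gamma|}$ gains $\rho^{|\gamma|}$, while the $2|\gamma|$ derivatives cost $\rho^{-2|\gamma|}$. So shrinking $\rho$ to get $\|r\|_\rho<1/2$ makes the would-be algebra constant worse, not better. Extra weights such as $(|\alpha|+|\beta|+k)!^{-\epsilon}$ do not remove the binomial factor, and would in any case cost you the sharp index $s+\sigma-1$.

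Your fallback names the right ingredient: Cauchy-type estimates with loss of radius, i.e. the paper's \eqref{eq:Gevrey differentiation}. As stated, though, it does not close. Each composition costs some radius, and the Neumann series involves infinitely many compositions, so you need a single norm that is exactly submultiplicative despite that loss. Supplying this is the real content of the paper's proof.

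The paper encodes $p$ by the infinite-order operator $A=\sum_mh^mA_m$, $A_m=\sum_{k+|\alpha|=m}\frac{1}{\alpha!}\partial_\xi^\alpha p_k D_x^\alpha$. It lets $A$ act on the scale $B(K,T)$, with $K$ fixed and only the radius $T$ in $N_{s,\sigma}(\cdot,T)$ varying, so no shrinking compacts are needed. It defines $f_m^K(A)$ as the best constant in $\|A_m\|_{K,T_1,T}\le f_m^K(A)(m^m)^{s+\sigma-1}(T_1^{1/s}-T^{1/s})^{-sm}$ for all $0\le T<T_1\le T_0$. For $A_m\circ B_l$ it chooses the intermediate radius $T'$ so that the gap $T_1^{1/s}-T^{1/s}$ is split between the two factors in the ratio $m:l$. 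Together with $m^ml^l\le(m+l)^{m+l}$, this yields $f_k^K(A\circ B)\le\sum_{m+l=k}f_m^K(A)f_l^K(B)$. Hence $\|A\|_{K,\rho}=\sum_m\rho^mf_m^K(A)$ is submultiplicative with constant $1$.

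Here $\rho$ weighs only the combined order $m=k+|\alpha|$, and the regularity radius $T$ is a separate parameter. This decoupling is what makes $\|r\|_{K,\rho}=\OO(\rho)$ compatible with the algebra property. Without this device, or an equivalent one, your argument does not yet show that the $q_k$ satisfy \eqref{eq:Gevrey symbol defn}.
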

	
	\begin{remark}
		The pseudodifferential calculus associated to formal Gevrey symbols traces back to \cite{BoKr}, see also \cite{Zan85}, \cite{Rodino}, where they considered $\GG_x^s\GG_\xi^1$ symbols. Our proof of Theorem \ref{thm:main} is inspired by Sj\"ostrand's version \cite[Chapter 1]{Sj82} for $\GG_x^1\GG_\xi^1$ (i.e., analytic) symbols of the argument in \cite{BoKr}. We develop a new approach that applies to the more general $\GG_x^s\GG_\xi^\sigma$ symbol class, which also provides an alternative proof of the result in \cite{BoKr}. 
	\end{remark}
	
	\begin{coro}
		Let $s,\,\sigma > 1$, and let $\Omega\subset\RR^{2n}$ be open. Let $p=\sum_{k=0}^{\infty} h^k p_k$ be an elliptic formal $\GG^{s,\sigma}$ symbol on $\Omega$. Let $q$ be its parametrix given in Theorem \ref{thm:main}. Then for every $\Omega_0\Subset\Omega_1\Subset\Omega$ and cutoff functions $\chi_j\in\GG^{s,\sigma}(\Omega_j)$ with $\supp \chi_j\subset\Omega_j$, $j=0, 1$, satisfying $\chi_1=1$ near $\supp \chi_0$, we have
		\begin{equation}\label{eq:parametrix estimate}
		\begin{gathered}
			\Op_h(\chi_0) (\Op_h(p_{\Omega_1})\Op_h(q_{\Omega_1}) - I)\Op_h(\chi_1) = \OO_{L^2 \to L^2} (e^{-C^{-1}h^{-1/(s+\sigma-1)}}), \\
			\Op_h(\chi_0) (\Op_h(q_{\Omega_1})\Op_h(p_{\Omega_1}) - I)\Op_h(\chi_1) = \OO_{L^2 \to L^2} (e^{-C^{-1}h^{-1/(s+\sigma-1)}}).
		\end{gathered}
		\end{equation} 
	Here $p_{\Omega_1}, q_{\Omega_1}$ are realizations of $p, q$ on $\Omega_1$, given as in \eqref{eq:optimal realization}.
	\end{coro}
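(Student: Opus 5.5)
The plan is to reduce the Corollary to three ingredients: Theorem \ref{thm:main}, a quantitative composition result for realizations, and $L^2$ boundedness of operators with Gevrey-small symbols. The order of steps is as follows.

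First, we use Theorem \ref{thm:main}, which says the formal product $p\sharp q$ equals $1$ as a formal $\GG^{s,\sigma}$ symbol. So it suffices to control two differences on $\Omega_0$. The first is between the exact symbol of $\Op_h(p_{\Omega_1})\Op_h(q_{\Omega_1})$ and the realization $(p\sharp q)_{\Omega_1}$ of the formal product. The second is between that realization and $1$. For the second, the formal product has $k$-th coefficients equal to $\delta_{k0}$, so its optimal realization is exactly $1$. For the first, we write $\Op_h(p_{\Omega_1})\Op_h(q_{\Omega_1})=\Op_h(p_{\Omega_1}\sharp q_{\Omega_1})$. We then expand by stationary phase,
\[
p_{\Omega_1}\sharp q_{\Omega_1} = \sum_{|\alpha|\le N}\frac{h^{|\alpha|}}{i^{|\alpha|}\alpha!}\partial_\xi^\alpha p_{\Omega_1}\,\partial_x^\alpha q_{\Omega_1} + r_N ,
\]
and choose $N\sim (Rh)^{-1/(s+\sigma-1)}$. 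Regrouping the finite sum by powers of $h$ gives the truncated formal coefficients of $p\sharp q$, up to terms with $k\gtrsim N$. Using the bounds \eqref{eq:Gevrey symbol defn} for $p,q$ and the coefficient bounds of $p\sharp q$ from Theorem \ref{thm:main}, these terms are bounded by $C^k k!^{s+\sigma-1}h^k$. At the optimal truncation this is $\OO(e^{-C^{-1}h^{-1/(s+\sigma-1)}})$, by the standard Stirling argument behind \eqref{eq:optimal realization}.

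Second, the cutoffs $\chi_0,\chi_1$ localize the problem, since $p_{\Omega_1},q_{\Omega_1}$ are only defined on $\Omega_1$. We extend them by multiplying with a Gevrey cutoff equal to $1$ near $\supp\chi_1$; this is where $s,\sigma>1$ is needed, because compactly supported $\GG^{s,\sigma}$ cutoffs must exist. Off-diagonal contributions are handled by the Gevrey pseudolocality estimate: if $\chi\in\GG^{s,\sigma}$ and $\tilde\chi$ has disjoint support, then $\Op_h(\chi)\Op_h(a)\Op_h(\tilde\chi)=\OO(e^{-C^{-1}h^{-1/(s+\sigma-1)}})$. We prove this by integrating by parts $N$ times in the oscillatory integral, using $|\nabla\text{phase}|\ge c>0$, and optimizing in $N$. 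The same mechanism lets us replace $1-\chi_1$ terms by exponentially small errors. Finally, a symbol bounded with its derivatives up to order $2n+1$ by $e^{-C^{-1}h^{-1/(s+\sigma-1)}}$ yields an operator of that $L^2$ norm, by the Calder\'on--Vaillancourt theorem.

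The main obstacle is the remainder $r_N$ in the stationary phase expansion of $p_{\Omega_1}\sharp q_{\Omega_1}$ under anisotropic Gevrey bounds. For this I would use the exact formula $a\sharp b=e^{ih D_\xi\cdot D_y}(a(x,\xi)b(y,\eta))|_{y=x,\eta=\xi}$ with Taylor's integral remainder of order $N$. This requires $2N+2n+1$ derivatives of $a$ in $\xi$ and of $b$ in $x$. Bounding them by $C^{N}N!^{\sigma}N!^{s}$ gives $|r_N|\lesssim C^N N!^{s+\sigma}h^N/N!$, which has the right order $N!^{s+\sigma-1}$. This order matches the exponent $1/(s+\sigma-1)$.

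The remaining issue is that the realizations $p_{\Omega_1},q_{\Omega_1}$ themselves carry $h$-dependent sums. Their derivatives are still bounded uniformly by $C^{1+|\alpha|+|\beta|}\alpha!^s\beta!^\sigma$, since $\sum_k h^kR^k k!^{s+\sigma-1}$ stays bounded up to the truncation index. The estimate therefore closes. The second identity in \eqref{eq:parametrix estimate} follows symmetrically from $q\sharp p\sim1$.
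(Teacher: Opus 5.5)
Your sketch is correct and follows the route the paper intends. The paper gives no separate proof of this corollary; it presents it as a direct consequence of Theorem~\ref{thm:main} plus the standard Gevrey calculus for realizations, and you supply exactly that: $p\sharp q=q\sharp p=1$ formally so the realization of the product is $1$, truncated stationary phase with an exponentially small Taylor remainder, Gevrey pseudolocality away from $\supp\chi_0$, and Calder\'on--Vaillancourt.

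One bookkeeping point needs tightening. The leftover terms have total $h$-degree up to $N+M_p+M_q$, where $M_p,M_q$ are the truncation indices in \eqref{eq:optimal realization}. A lumped bound $C^j\,j!^{s+\sigma-1}h^j$, with $C$ larger than the realization constants, is therefore not small near the top of that range. Instead, bound each term $h^{k+l+|\alpha|}\partial_\xi^\alpha p_k\,\partial_x^\alpha q_l/\alpha!$ by the product $(R_ph)^k k!^{s+\sigma-1}\,(R_qh)^l l!^{s+\sigma-1}\,(R_pR_qh)^{|\alpha|}|\alpha|!^{s+\sigma-1}\lesssim e^{-c(k+l+|\alpha|)}$. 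This holds because $k\le M_p$, $l\le M_q$, and $|\alpha|\le N$ with $N\le (R_pR_qh)^{-1/(s+\sigma-1)}$. The coefficient bounds for $p\sharp q$ from Theorem~\ref{thm:main} play no role in this step.
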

	
	\begin{remark}
		We note that, in the case $s=\sigma>1$, Lascar \cite{Lascar1988} constructed a microlocal parametrix for elliptic Gevrey symbols with a sharper remainder $\OO(e^{-C^{-1}h^{-1/s}})$ in \eqref{eq:parametrix estimate}, which is particularly well suited to the propagation of Gevrey singularities. However, Lascar's construction is based on a finite-order symbolic inverse followed by a less explicit Neumann series argument, and does not provide a full asymptotic expansion with quantitative control of its coefficients. In contrast, our parametrix admits a complete formal $\GG^{s,\sigma}$ expansion $q=\sum_{k=0}^{\infty} h^k q_k$ with uniform Gevrey estimates (as in \eqref{eq:Gevrey symbol defn}) on the coefficients $q_k$. This property is essential in applications where the individual coefficients and their growth play an important role, such as Gevrey Bergman and Szeg\"o kernel asymptotics, Berezin--Toeplitz quantization calculus, spectral trace expansions, and quantum Birkhoff normal forms.
	\end{remark}
	
	The consideration of Gevrey (pseudo)differential operators has a long-standing tradition in the theory of PDEs, beginning with the seminal work \cite{BoKr}. Gevrey regularity problems have been studied in various contexts, including quantum theory \cite{BLR87}, \cite{Rouleux1998,Rouleux}, \cite{GaZw21}, dynamical systems \cite{MiPo10}, FBI transform \cite{Lascar1997}, \cite{GuedesFBIGevrey}, propagation of singularities \cite{Lascar1988}, \cite{Lascar1991}, \cite{Sordoni}, and pseudodifferential operators in the complex domain \cite{HiLaSjZeI, HiLaSjZeII}.
	
	\medskip
	
	In his foundational work on analytic microlocal analysis, Sj\"ostrand \cite[Chapter 1]{Sj82} introduced a family of pseudonorms for classical analytic (in our context, $\GG^{1,1}$) symbols, which give the symbolic composition a Banach algebra structure, showing that the microlocal inverse of an elliptic classical analytic symbol is again a classical analytic symbol. This functional analytic approach avoids tedious recursive type coefficient estimates that often require careful controls of factorial growth and combinatorial sums. In this work, we extend this impactful approach to the Gevrey setting, which is expected to simplify arguments of obtaining sharp asymptotic estimates in Gevrey classes, for instance, Bergman kernel coefficient estimates as in \cite{XiongXu2024}. We note that the framework developed here is not specific to the Gevrey setting and is expected to extend to broader ultradifferentiable settings, notably Denjoy--Carleman classes.

	In section \ref{sec:pseudonorms}, we introduce a family of norms for formal Gevrey symbols with similarly nice properties as the pseudonorms in \cite{Sj82}, which yield the property of a Banach algebra under the symbol calculus without calculation. We then use these norms to prove Theorem \ref{thm:main}. As an application, we apply our result to the adiabatic theory in Section \ref{sec:adiabatic}, obtaining estimates on the adiabatic projectors in the Gevrey setting.
	
	\medskip
	
	\noindent
	{\sc Acknowledgments.} The author would like to thank Michael Hitrik for valuable suggestions on the presentation and the scope of this work. The author would also like to thank Hang Xu for many inspiring discussions. Moreover, the author is grateful to Richard Lascar, Michel Rouleux, and Maciej Zworski for their valuable feedback. 
	
	\section{Formal Gevrey symbols and pseudonorms}\label{sec:pseudonorms}
	
	Let $s,\,\sigma\geq 1$, and let $\Omega\subset\RR^{2n}$ be open. For $T>0$ and $a\in\CI(\Omega)$ we introduce the following resummation:
	\begin{equation}\label{eq:Gevrey function norm}
		N_{s,\sigma}(a,T)(x,\xi) = \sum_{j=0}^\infty  \sum_{|\alpha|+|\beta|=j} {|\partial_x^\alpha \partial_\xi^\beta a(x,\xi)|}T^j /(|\alpha|!^s |\beta|!^\sigma).  
	\end{equation}
	
	\begin{lemma}\label{lemma:Gevrey function seminorms}
		Let $a\in C^\infty(\Omega)$. If $a\in\GG_x^s\GG_\xi^\sigma(\Omega)$, then for every compact $K\subset \Omega$ there exists $T_0>0$ such that $\sup_{(x,\xi)\in K} N_{s,\sigma}(a,T)(x,\xi) < \infty$, $\forall T\in [0,T_0)$. Conversely, if for every compact $K\subset \Omega$, $\sup_{(x,\xi)\in K} N_{s,\sigma}(a,T)(x,\xi) < \infty$ for some $T>0$, then $a\in\GG_x^s\GG_\xi^\sigma(\Omega)$. 
	\end{lemma}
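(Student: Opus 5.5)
The proof will be a direct comparison between the Gevrey bound \eqref{Gevrey definition} and the series \eqref{Gevrey function norm}. We use two elementary facts. First, $\alpha!\le|\alpha|!\le n^{|\alpha|}\alpha!$, by the multinomial identity $\sum_{|\alpha|=m}\frac{m!}{\alpha!}=n^m$. Second, $\#\{(\alpha,\beta):|\alpha|+|\beta|=j\}\le 2^{2n+j-1}$, or any similar bound exponential in $j$.

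\emph{Direct implication.} Fix a compact $K\subset\Omega$ and let $C,R$ be the constants of \eqref{Gevrey definition} on $K$. For $|\alpha|+|\beta|=j$ we have
\[
|\partial_x^\alpha\partial_\xi^\beta a|\,T^j/(|\alpha|!^s|\beta|!^\sigma)\le C R^jT^j\,\alpha!^s\beta!^\sigma/(|\alpha|!^s|\beta|!^\sigma)\le C(RT)^j,
\]
where the last step uses $\alpha!\le|\alpha|!$ and $\beta!\le|\beta|!$. Summing over the at most $2^{2n+j}$ multi-indices with $|\alpha|+|\beta|=j$ bounds $N_{s,\sigma}(a,T)$ on $K$ by $C2^{2n}\sum_j(2RT)^j$. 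This geometric series is finite uniformly on $K$ once $T<T_0:=1/(2R)$.

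\emph{Converse.} Fix $K$ and a $T>0$ with $M:=\sup_K N_{s,\sigma}(a,T)<\infty$. Every term of the series is bounded by $M$, so
\[
|\partial_x^\alpha\partial_\xi^\beta a|\le M\,T^{-|\alpha|-|\beta|}\,|\alpha|!^s|\beta|!^\sigma\le M\,(n^sT^{-1})^{|\alpha|}(n^\sigma T^{-1})^{|\beta|}\,\alpha!^s\beta!^\sigma .
\]
This is \eqref{Gevrey definition} on $K$ with $C=M$ and $R=n^{\max(s,\sigma)}/T$ (we may assume $T\le 1$ after shrinking). Since the hypothesis holds for every compact $K$, we conclude $a\in\GG_x^s\GG_\xi^\sigma(\Omega)$.

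There is no genuine obstacle here. The only points needing care are the conversions between $\alpha!$ and $|\alpha|!$, which cost factors $n^{s|\alpha|}$ and $n^{\sigma|\beta|}$ that are absorbed into $R$, and the polynomial-to-exponential growth of the number of multi-indices, which is absorbed by taking $T$ small.
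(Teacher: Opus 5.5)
Your proof is correct and follows the paper's argument. The direct part is a termwise comparison using $\alpha!\le|\alpha|!$ plus a count of the multi-indices, and the converse bounds each term of the series by $M$ and then uses $|\gamma|!\le n^{|\gamma|}\gamma!$. The one difference is that you bound the count by $2^{j+2n-1}$, which gives $T_0=1/(2R)$, whereas the paper uses the exact count $\binom{j+2n-1}{2n-1}$ to get the closed form $C(1-RT)^{-2n}$ for $T<R^{-1}$, and it reuses that form later in \eqref{eq:mid proof 2}. Your shrinking to $T\le 1$ is unnecessary, since $n\ge 1$ already gives $n^{s},n^{\sigma}\le n^{\max(s,\sigma)}$.
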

	
	\begin{proof}
		Suppose that $a\in\GG_x^s\GG_\xi^\sigma(\Omega)$, then for every compact $K\subset \Omega$, there exist $C,\,R>0$ such that \eqref{Gevrey definition} holds. For any $T\in [0,R^{-1})$, $(x,\xi)\in K$, we have 
		\begin{equation}\label{eq:Gevrey norm bound}
			\begin{split}
				N_{s,\sigma}(a,T)(x,\xi) &\leq C \sum_{j=0}^{\infty} R^j T^j \sum_{|\alpha|+|\beta|=j} \frac{\alpha!^s}{|\alpha|!^s} \frac{\beta!^\sigma}{|\beta|!^\sigma} \\
				&\leq C \sum_{j=0}^{\infty} \binom{j+2n-1}{2n-1} (RT)^j = \frac{C}{(1-RT)^{2n}},
			\end{split}
		\end{equation}
		where we used the facts $\gamma!\leq |\gamma|!$ and $\#\{\gamma\in\NN^d : |\gamma|=j\} = \binom{j+d-1}{d-1}$.
		
		Conversely, if for every compact $K\subset \Omega$, there are constants $C,\,T>0$ such that
		\begin{equation*}
			N_{s,\sigma}(a,T)(x,\xi) = \sum_{j=0}^\infty  \sum_{|\alpha|+|\beta|=j} {|\partial_x^\alpha \partial_\xi^\beta a(x,\xi)|}T^j /(|\alpha|!^s |\beta|!^\sigma)\leq  C,\quad\forall (x,\xi)\in K.
		\end{equation*}
		Then for any $\alpha\,\beta\in\NN^n$,
		\begin{equation*}
			\sup_{(x,\xi)\in K} |\partial_x^\alpha \partial_\xi^\beta a(x,\xi)| \leq C (T^{-1})^{|\alpha|+|\beta|} |\alpha|!^s |\beta|!^\sigma \leq C (n^{s+\sigma}T^{-1})^{|\alpha|+|\beta|} \alpha!^s \beta!^s,
		\end{equation*}
		where we used the bound $|\gamma|!/\gamma!\leq n^{|\gamma|}$, $\gamma\in\NN^n$. Therefore, $a\in \GG_x^s\GG_\xi^\sigma(\Omega)$. 
	\end{proof}
	
	Let us prove next two basic properties of the resummation $N_{s,\sigma}(a,T)$, which verify that the $\GG_x^s\GG_\xi^\sigma$ class is closed under multiplication and differentiation.
	
	\begin{lemma}
		Let $a,\,b\in C^\infty(\Omega)$ and let $T>0$. We have
		\begin{enumerate}
			\item If $N_{s,\sigma}(a,T)(x,\xi),\, N_{s,\sigma}(b,T)(x,\xi)<\infty$, then
			\begin{equation}\label{eq:Gevrey multiplication}
				N_{s,\sigma}(ab,T)(x,\xi) \leq N_{s,\sigma}(a,T)(x,\xi) N_{s,\sigma}(b,T)(x,\xi).
			\end{equation}
			\item If $N_{s,\sigma}(a,T)(x,\xi)<\infty$, then for every $T_1>T$ and $\gamma\in\NN^n$,
			\begin{equation}\label{eq:Gevrey differentiation}
				N_{s,\sigma}(\partial_x^\gamma a,T)(x,\xi) \leq  (e^s (T_1^{1/s} - T^{1/s})^{-s})^{|\gamma|}|\gamma|!^s N_{s,\sigma}(a,T_1)(x,\xi).
			\end{equation}
		\end{enumerate}
	\end{lemma}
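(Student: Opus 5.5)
For (1), expand $\partial_x^\alpha\partial_\xi^\beta(ab)$ by the Leibniz rule, $\sum_{\alpha'+\alpha''=\alpha,\,\beta'+\beta''=\beta}\binom{\alpha}{\alpha'}\binom{\beta}{\beta'}\partial_x^{\alpha'}\partial_\xi^{\beta'}a\,\partial_x^{\alpha''}\partial_\xi^{\beta''}b$, and insert it into \eqref{eq:Gevrey function norm}. The weight $T^{|\alpha|+|\beta|}$ factors as $T^{|\alpha'|+|\beta'|}T^{|\alpha''|+|\beta''|}$. For the factorials, note $\binom{\alpha}{\alpha'}\le\binom{|\alpha|}{|\alpha'|}$. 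Since $s\ge1$, we also have $\binom{|\alpha|}{|\alpha'|}\le\binom{|\alpha|}{|\alpha'|}^s$, so
\[
\binom{\alpha}{\alpha'}\frac{1}{|\alpha|!^s}\le\frac{1}{|\alpha'|!^s|\alpha''|!^s},
\]
and similarly in $\beta$ with $\sigma$. After the triangle inequality, summing over all $(\alpha,\beta)$ and all splittings is the same as summing independently over $(\alpha',\beta')$ and $(\alpha'',\beta'')$. This gives exactly the product $N_{s,\sigma}(a,T)N_{s,\sigma}(b,T)$. All the series involved have nonnegative terms, so rearrangement is justified.

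For (2), I would first reduce to the single term: the claim for $\gamma$ should come from bounding each term of $N_{s,\sigma}(\partial_x^\gamma a,T)$ by a term of $N_{s,\sigma}(a,T_1)$. Write $m=|\gamma|$. The $(\alpha,\beta)$ term of $N_{s,\sigma}(\partial_x^\gamma a,T)$ is
\[
|\partial_x^{\alpha+\gamma}\partial_\xi^\beta a|\,T^{|\alpha|+|\beta|}/(|\alpha|!^s|\beta|!^\sigma),
\]
and it is matched with the $(\alpha+\gamma,\beta)$ term of $N_{s,\sigma}(a,T_1)$. Their ratio is
\[
\frac{(|\alpha|+m)!^s}{|\alpha|!^s}\,\frac{T^{|\alpha|}}{T_1^{|\alpha|+m}}\,\frac{T^{|\beta|}}{T_1^{|\beta|}}.
\]
Since $T<T_1$, the $\beta$ factor is at most $1$. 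The map $(\alpha,\beta)\mapsto(\alpha+\gamma,\beta)$ is injective, so it then suffices to show
\[
\sup_{k\ge0}\frac{(k+m)!^s}{k!^s}\,\frac{T^k}{T_1^{k+m}}\le e^{sm}(T_1^{1/s}-T^{1/s})^{-sm}\,m!^s .
\]

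The key step is this elementary inequality. Taking $s$-th roots and setting $t=T^{1/s}$, $t_1=T_1^{1/s}$, it becomes
\[
\binom{k+m}{m}\,m!\,\frac{t^k}{t_1^{k+m}}\le e^m m!\,(t_1-t)^{-m}.
\]
Dividing by $m!$, I need $\binom{k+m}{m}t^k t_1^{-k-m}\le e^m(t_1-t)^{-m}$. But
\[
\sum_k\binom{k+m}{m}\Big(\frac{t}{t_1}\Big)^k=\Big(1-\frac{t}{t_1}\Big)^{-m-1},
\]
so each term is at most $t_1^{-m}(1-t/t_1)^{-m-1}=t_1(t_1-t)^{-m-1}$. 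This has one extra factor $t_1/(t_1-t)$, which the $e^m$ must absorb, so the negative binomial series bound is too crude and a sharper estimate is needed.

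That sharper estimate is the main obstacle, and I would handle it via Cauchy-type reasoning. The quantity $\binom{k+m}{m}t^k$ is $\frac{1}{m!}\frac{d^m}{dt^m}t^{k+m}$. For a polynomial in $t$, Cauchy's estimate on a disc of radius $r=t_1-t$ around $t$ gives $\frac{1}{m!}|f^{(m)}(t)|\le r^{-m}\max_{|z-t|=r}|f|\le r^{-m}t_1^{k+m}$. Hence $\binom{k+m}{m}t^k\le t_1^{k+m}(t_1-t)^{-m}$ with no extra factor, which is even better than required; the $e^m$ leaves room for alternative Stirling-type proofs. Combining this with the termwise comparison yields \eqref{eq:Gevrey differentiation}.
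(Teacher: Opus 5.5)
Your proof is correct. Part (1) is the paper's argument (Leibniz rule, $\binom{\alpha}{\alpha'}\le\binom{|\alpha|}{|\alpha'|}\le\binom{|\alpha|}{|\alpha'|}^s$, then reindexing). In part (2) your termwise matching of the $(\alpha,\beta)$ term of $N_{s,\sigma}(\partial_x^\gamma a,T)$ with the $(\alpha+\gamma,\beta)$ term of $N_{s,\sigma}(a,T_1)$ is also what the paper does. The paper writes $1/(|\alpha|-|\gamma|)!^s=\binom{|\alpha|}{|\gamma|}^s|\gamma|!^s/|\alpha|!^s$ and enlarges $\binom{|\alpha|}{|\gamma|}$ to $\binom{j}{|\gamma|}$ with $j=|\alpha|+|\beta|$, whereas you simply discard $(T/T_1)^{|\beta|}\le 1$; these are equivalent.

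The genuine difference is how the scalar supremum is bounded. The paper locates the maximiser of $A_k=\binom{b+k}{b}^s\theta^{b+k}$, with $\theta=T/T_1$, using the ratio $A_k/A_{k-1}=(1+b/k)^s\theta$. It then bounds $\binom{b+l}{b}\le(b+l)^b/b!$ at that index and finishes with $b^b/b!\le e^b$, which is exactly where the factor $e^{s|\gamma|}$ comes from.

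You instead apply Cauchy's inequality to $z^{k+m}$ on the circle of radius $t_1-t$ about $t$. Equivalently, you keep the single term $j=m$ of the expansion $t_1^{k+m}=(t+(t_1-t))^{k+m}\ge\binom{k+m}{m}t^k(t_1-t)^m$. This gives $\binom{k+m}{m}t^k\le t_1^{k+m}(t_1-t)^{-m}$ in one line, with no search for the maximum and no Stirling bound.

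Your route is therefore shorter and proves \eqref{eq:Gevrey differentiation} without the factor $e^{s|\gamma|}$. The paper's computation is slightly lossy, but harmlessly so: that constant is absorbed into $C_1$ in \eqref{eq:operator norm Am}. Your intermediate detour through the negative binomial series can simply be dropped.
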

	
	\begin{proof}
		(1) By Leibniz formula, for any $\alpha,\,\beta \in\NN^n$ (all derivatives are evaluated at $(x,\xi)$ in the following),
		\begin{equation*}
			\partial_x^\alpha \partial_\xi^\beta (ab) = \sum_{\alpha_1+\alpha_2=\alpha}\sum_{\beta_1+\beta_2=\beta} \binom{\alpha}{\alpha_1} \binom{\beta}{\beta_1} \partial_x^{\alpha_1}\partial_\xi^{\beta_1} a\cdot \partial_x^{\alpha_2}\partial_\xi^{\beta_2} b.
		\end{equation*}
		Using the inequality $\binom{\mu}{\nu}\leq \binom{|\mu|}{|\nu|}$ for $\nu\leq \mu$, $\mu,\nu\in\NN^n$, and noting that $s,\sigma\geq 1$, we apply the triangle inequality and obtain
		\begin{align*}
			\sum_{\alpha,\beta\in\NN^n}\frac{|\partial_x^\alpha \partial_\xi^\beta (ab)|}{|\alpha|!^s |\beta|!^\sigma} T^{|\alpha|+|\beta|} \leq & 	\sum_{\alpha,\beta\in\NN^n} \sum_{\alpha_1+\alpha_2=\alpha}
			\sum_{\beta_1+\beta_2=\beta} \frac{|\partial_x^{\alpha_1} \partial_\xi^{\beta_1} a|}{|\alpha_1|!^s |\beta_1|!^\sigma}  \frac{|\partial_x^{\alpha_2} \partial_\xi^{\beta_2} b|}{|\alpha_2|!^s |\beta_2|!^\sigma}T^{|\alpha|+|\beta|}\\
			\leq & \sum_{\alpha_1,\beta_1\in\NN^n}\frac{|\partial_x^{\alpha_1} \partial_\xi^{\beta_1} a|}{|\alpha_1|!^s |\beta_1|!^\sigma} T^{|\alpha_1|+|\beta_1|} \sum_{\alpha_2,\beta_2\in\NN^n}\frac{|\partial_x^{\alpha_2} \partial_\xi^{\beta_2} b|}{|\alpha_2|!^s |\beta_2|!^\sigma} T^{|\alpha_2|+|\beta_2|},
		\end{align*}
		that is, \eqref{eq:Gevrey multiplication} holds, given that the two series on the right converge.
		
		(2) We first note that for any $\gamma\in\NN^n$, 
		\begin{equation*}
			N_{s,\sigma}(\partial_x^\gamma a,T)(x,\xi) = \sum_{j=0}^{\infty} \sum_{|\alpha|+|\beta|=j} \frac{|\partial_x^{\alpha+\gamma}\partial_\xi^{\beta}a|}{|\alpha|!^s|\beta|!^\sigma} T^j 
			=\sum_{j=|\gamma|}^\infty \sum_{\substack{|\alpha|+|\beta|=j \\ \alpha\geq \gamma}} \frac{|\partial_x^{\alpha}\partial_\xi^{\beta}a|}{(|\alpha|-|\gamma|)!^s|\beta|!^\sigma} T^{j-|\gamma|}.
		\end{equation*}
		Then we have for any $T_1>T$,
		\begin{equation}\label{eq:mid 1}
			\begin{split}
				N_{s,\sigma}(\partial_x^\gamma a,T)(x,\xi) \leq & T^{-|\gamma|}|\gamma|!^s \sum_{j=|\gamma|}^{\infty}\sum_{\substack{|\alpha|+|\beta|=j \\ \alpha\geq \gamma}} \frac{|\partial_x^{\alpha}\partial_\xi^{\beta}a|}{|\alpha|!^s|\beta|!^\sigma} T_1^j  \cdot {\binom{|\alpha|}{|\gamma|}}^s (T/T_1)^j \\
				\leq & T^{-|\gamma|}|\gamma|!^s N_{s,\sigma}(a,T_1)(x,\xi) \sup_{j\geq |\gamma|} {\binom{j}{|\gamma|}}^s (T/T_1)^j.
			\end{split}
		\end{equation}
		We consider therefore the sequence $A_k:= {\binom{b+k}{b}}^s \theta^{b+k}$, $k\in\NN$, for some $b\in\NN$ and $\theta\in (0,1)$. Since $A_k/A_{k-1} = (1+b/k)^s \theta$, $k\geq 1$, we have
		\begin{equation*}
			\max_{k\in\NN} A_k= A_l, \quad l=\lfloor \frac{b}{\theta^{-1/s}-1}\rfloor.
		\end{equation*}
		Noting that $\binom{b+l}{b}\leq (b+l)^b/b! \leq (b^b/b!) \theta^{-b/s} (\theta^{-1/s}-1)^{-b}$, then
		\begin{equation}\label{eq:Ak max}
			\max_{k\in\NN} A_k \leq {\binom{b+l}{b}}^s \theta^{b+l} \leq (b^b/b!)^s (\theta^{-1/s}-1)^{-bs} \theta^l \leq e^{bs} (\theta^{-1/s}-1)^{-bs} ,
		\end{equation}
		where we used Stirling's formula to get $b! > (b/e)^b$. Recalling \eqref{eq:mid 1} and applying \eqref{eq:Ak max} with $b=|\gamma|$ and $\theta=T/T_1$, we obtain \eqref{eq:Gevrey differentiation}.
	\end{proof}

	We now consider pseudodifferential calculus and parametrix construction in the class of formal  $\GG^{s,\sigma}$ symbols. To this end, let 
	\begin{equation*}
		p(x,\xi;h) = \sum_{k=0}^\infty h^k p_k(x,\xi),\quad q(x,\xi;h) = \sum_{k=0}^\infty h^k q_k(x,\xi)
	\end{equation*}  
	be formal $\GG^{s,\sigma}$ symbols on $\Omega$, the composed symbol is given by
	\begin{equation}\label{eq:composed symbol}
		r = p\sharp q = \sum_{\alpha\in\NN^n} \frac{h^{|\alpha|}}{\alpha!} i^{-|\alpha|} \frac{\partial^\alpha p}{\partial\xi^\alpha} \frac{\partial^\alpha q}{\partial x^\alpha},
	\end{equation}
	which is viewed as a formal power series in $h$ with every coefficient being a finite sum.
	
	Inspired by \cite{BoKr} and \cite{Sj82}, while we follow closely \cite{Sj82} and introduce the following differential operator of infinite order associated to a formal $\GG^{s,\sigma}$ symbol $p(x,\xi;h)$:
	\begin{equation}\label{eq:operator A}
		A(x,\xi,D_x;h) = \sum_{\alpha\in\NN^n} \frac{1}{\alpha!} \frac{\partial^\alpha p}{\partial\xi^\alpha} (x,\xi;h)(hD_x)^\alpha = \sum_{m=0}^\infty h^m A_m(x,\xi,D_x),
	\end{equation}
	where
	\begin{equation}\label{eq:operator Am}
		A_m = \sum_{k+|\alpha|=m} \frac{1}{\alpha!} \partial_\xi^\alpha p_k(x,\xi) D_x^\alpha.
	\end{equation}
	We note that one can recover the formal symbol $p$ from $A$ via the formula:
	\begin{equation*}
		p_m(x,\xi) = A_m(x,\xi,D_x)(1),\quad\forall m\in\NN.
	\end{equation*}
	
	\begin{remark}\label{remark:composition}
		Let $A$ be the associated operator of a formal classical symbol $p$, i.e. $p=\sum_k h^k p_k$, $p_k\in C^\infty(\Omega)$. If $B$ is the associated operator of another formal classical symbol $q$, then the operator associated to $r=p\sharp q$ is $C=A\circ B = \sum_{k=0}^{\infty} h^k C_k$, where $C_k = \sum_{m+l=k} A_m\circ B_l$. This is a purely algebraic result.
	\end{remark} 
	
	Let us consider the above differential operators acting on the following spaces:
	
	\begin{defi}
		Let $K\subset\Omega$ be compact and let $T>0$. Let
		\begin{equation*}
			B(K,T) = \{ a\in C^\infty(\Omega) : \sup_{(x,\xi)\in K} N_{s,\sigma}(a,T)(x,\xi) < \infty \},
		\end{equation*}
		and we equip the space $B(K,T)$ with the norm
		\begin{equation*}
			\|a\|_{K,T} := \sup_{(x,\xi)\in K} N_{s,\sigma}(a,T)(x,\xi),\quad a\in B(K,T).
		\end{equation*}
	\end{defi}
	
	\noindent
	Let us fix any compact $K\subset \Omega$. It follows from \eqref{eq:Gevrey symbol defn} that $\partial_\xi^\alpha p_k\in\GG_x^s\GG_\xi^\sigma(\Omega)$ satisfies 
	\begin{equation*}
		\begin{split}
			|\partial_{x}^\gamma \partial_\xi^\beta (\partial_\xi^\alpha p_k) (x,\xi)| &\leq C R^{k+|\alpha|+ |\beta|+|\gamma|} k!^{s+\sigma-1} {\gamma}!^s (\alpha+\beta)!^\sigma, \\
			&\leq C R^{k} (2^\sigma R)^{|\alpha|} \alpha!^\sigma k!^{s+\sigma-1}\cdot (2^\sigma R)^{|\gamma|+|\beta|} \gamma!^s \beta!^\sigma,
		\end{split}
	\end{equation*} 
	for some constants $C=C_p$, $R=R_p>0$ uniform in all $\gamma,\,\beta\in\NN^{n}$ and $(x,\xi)\in K$. Here we used the binomial bound $\frac{(\alpha+\beta)!}{\alpha!\beta!}=\binom{\alpha+\beta}{\alpha}\leq 2^{|\alpha|+|\beta|}$. Then 
	\begin{equation}\label{eq:mid proof 2}
		\sup_{(x,\xi)\in K} N_{s,\sigma}(\partial_\xi^\alpha p_k, T)(x,\xi) \leq C R^{k} (2^\sigma R)^{|\alpha|} \alpha!^\sigma k!^{s+\sigma-1} (1-2^\sigma RT)^{-2n},
	\end{equation}
	which follows from \eqref{eq:Gevrey norm bound}, provided that $T\in [0,2^{-\sigma}R^{-1})$. 
	
	\noindent
	Let $a\in B(K,T_1)$, $T_1>T$. By \eqref{eq:Gevrey differentiation}, we have
	\begin{equation}\label{eq:mid proof 3}
		N_{s,\sigma}(D_x^\alpha a,T)(x,\xi) \leq (e n)^{s|\alpha|} \alpha!^s (T_1^{1/s} - T^{1/s})^{-s|\alpha|} N_{s,\sigma}(a,T_1)(x,\xi),
	\end{equation}
	where we used $|\alpha|!\leq n^{|\alpha|}\alpha!$. Using \eqref{eq:Gevrey multiplication} together with \eqref{eq:mid proof 2}, \eqref{eq:mid proof 3}, and taking $T\in [0,2^{-s-1}R_p^{-1})$, $T_1>T$, we obtain
	\begin{equation*}
		\left\| \frac{1}{\alpha!} \partial_\xi^\alpha p_k D_x^\alpha a \right\|_{K,T} \leq  {2^{2n}C (2^\sigma e^s n^s)^{|\alpha|}} R^{k+|\alpha|}(k!\alpha!)^{s+\sigma-1} (T_1^{1/s} - T^{1/s})^{-s|\alpha|}\|a\|_{K,T_1}.
	\end{equation*}
	Denote by $\|\cdot\|_{K,T_1,T}$ the operator norm from $B(K,T_1)$ to $B(K,T)$, then 
	\begin{equation}
		\left\| \frac{1}{\alpha!} \partial_\xi^\alpha p_k D_x^\alpha \right\|_{K,T_1,T} \leq C_1^{1+k+|\alpha|} (k!\alpha!)^{s+\sigma-1} (T_1^{1/s} - T^{1/s})^{-s|\alpha|},
	\end{equation}   
	for some constant $C_1>0$ depending only on the formal $\GG^{s,\sigma}$ symbol $p$ (more precisely, the constants $C, R$ in \eqref{eq:Gevrey symbol defn} with respect to $p$ and $K$).
	
	\noindent
	The number of terms in \eqref{eq:operator Am} is $\binom{m+n}{n}\leq (m+1)^n$, thus by taking a larger $C_1$ and using $k!\alpha!\leq (k+|\alpha|)!$, we conclude that there exists $T_0=T_0(A,K)$ such that
	\begin{equation}\label{eq:operator norm Am}
		\|A_m\|_{K,T_1,T}\leq C_1^{1+m} m!^{s+\sigma-1} ((T_1^{1/s} - T^{1/s})^{s})^{-m},\quad 0\leq T <T_1\leq T_0.
	\end{equation}
	
	Conversely, if $p=\sum_{k=0}^\infty h^k p_k(x,\xi)$ is assumed to be merely a formal classical symbol on $\Omega$, that is, $p_k\in C^\infty(\Omega)$ for every $k$, and if the associated operators $A_m$, $m\in\NN$ defined by \eqref{eq:operator Am} satisfy that
	for every compact $K\subset\Omega$ the estimate \eqref{eq:operator norm Am} holds with some constants $C_1,\, T_0>0$, then $p$ must be a formal $\GG^{s,\sigma}$ symbol on $\Omega$. In fact, noting that $p_k=A_k(1)$ and $N_{s,\sigma}(1,T_1)\equiv 1$ for any $T_1>0$, we deduce that for every compact $K\subset\Omega$, the estimate \eqref{eq:operator norm Am} implies 
	\begin{equation*}
		\sup_{(x,\xi)\in K} N_{s,\sigma}(p_k,T)(x,\xi) =  \|p_k\|_{K,T} \leq C^{1+k} k!^{s+\sigma-1},
	\end{equation*}
	for some constant $C>0$. It then follows from Lemma \ref{lemma:Gevrey function seminorms} and its proof that
	\begin{equation}
		\sup_{(x,\xi)\in K} |\partial_x^\alpha \partial_\xi^\beta p_k (x,\xi)| \leq C^{1+k} k!^{s+\sigma-1} ((2n)^s T^{-1})^{|\alpha|+|\beta|} \alpha!^s \beta!^s .
	\end{equation} 
	
	Let $A$ be an infinite order differential operator given by \eqref{eq:operator A} and \eqref{eq:operator Am} with $p=\sum_{k=0}^{\infty} h^k p_k$ being a formal classical (not necessarily $\GG^{s,\sigma}$) symbol on $\Omega$. For any $K\subset\Omega$ compact and $T_0>0$, we associate to $A$ the sequence $f^K(A)=\{f_m^K(A)\}_{m=0}^\infty$ where $f_m^K(A)\in [0,+\infty]$ is the smallest number such that
	\begin{equation}\label{eq:fmA defined bound}
		\|A_m\|_{K,T_1,T} \leq f_m^K(A) (m^m)^{s+\sigma-1} ((T_1^{1/s} - T^{1/s})^{s})^{-m},\quad 0\leq T <T_1\leq T_0.
	\end{equation}
	Then \eqref{eq:operator norm Am} holds if and only if $f_m^K(A)$ is of at most exponential growth in $m$.
	
	\begin{lemma}\label{lemma:fk composition}
		Let $A=\sum_{m=0}^\infty h^m A_m$, $B=\sum_{l=0}^\infty h^l B_l$ be differential operators of infinite order associated to formal classical symbols $p,\, q$ on $\Omega$ respectively. Suppose that there is a compact $K\subset\Omega$ with constants $C_1, T_0>0$ such that \eqref{eq:operator norm Am} holds for both $A$ and $B$ (with $B_m$ in place of $A_m$). If $C = A\circ B = \sum_{k=0}^{\infty} h^k C_k$, then
		\begin{equation}\label{eq:fkC bound}
			f_k^K(C) \leq \sum_{m+l=k} f_m^K(A) f_l^K(B).
		\end{equation} 
	\end{lemma}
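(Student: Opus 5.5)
The plan is to follow Sj\"ostrand's nested-norm argument from \cite[Chapter 1]{Sj82}, adapted to the variable $u=T^{1/s}$. We fix $0\le T<T_1\le T_0$ and $k\in\NN$. By Remark \ref{remark:composition}, $C_k=\sum_{m+l=k}A_m\circ B_l$, and the triangle inequality gives $\|C_k\|_{K,T_1,T}\le \sum_{m+l=k}\|A_m\circ B_l\|_{K,T_1,T}$. It therefore suffices to prove, for each pair $m+l=k$,
\begin{equation*}
\|A_m\circ B_l\|_{K,T_1,T}\le f_m^K(A)f_l^K(B)\,(k^k)^{s+\sigma-1}\bigl((T_1^{1/s}-T^{1/s})^s\bigr)^{-k}.
\end{equation*}
Summing over $m+l=k$ and using the minimality of $f_k^K(C)$ then yields \eqref{eq:fkC bound}. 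By the hypothesis \eqref{eq:operator norm Am}, all the $f$'s involved are finite.

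For fixed $m,l$ we insert an intermediate level $T'\in(T,T_1)$ and use submultiplicativity of operator norms through the chain $B(K,T_1)\to B(K,T')\to B(K,T)$:
\begin{equation*}
\|A_m\circ B_l\|_{K,T_1,T}\le \|A_m\|_{K,T',T}\,\|B_l\|_{K,T_1,T'}\le f_m^K(A)f_l^K(B)\,(m^ml^l)^{s+\sigma-1}\,(u'-u)^{-sm}(u_1-u')^{-sl},
\end{equation*}
where $u=T^{1/s}$, $u'=T'^{1/s}$ and $u_1=T_1^{1/s}$. We choose $T'$ optimally, namely $u'=u+\tfrac{m}{k}(u_1-u)$. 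This gives $u'-u=\tfrac mk(u_1-u)$ and $u_1-u'=\tfrac lk(u_1-u)$, so that
\begin{equation*}
(u'-u)^{-sm}(u_1-u')^{-sl}=(u_1-u)^{-sk}\Bigl(\frac{k^k}{m^ml^l}\Bigr)^{s}.
\end{equation*}
Combining the factors, we obtain
\begin{equation*}
(m^ml^l)^{s+\sigma-1}\Bigl(\frac{k^k}{m^ml^l}\Bigr)^{s}=k^{ks}(m^ml^l)^{\sigma-1}\le (k^k)^{s+\sigma-1}.
\end{equation*}
The last inequality uses $m^ml^l\le k^k$ and $\sigma\ge1$. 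This is exactly the required bound.

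The main obstacle is bookkeeping rather than ideas, and there are two points to check. The first is the degenerate cases $m=0$ or $l=0$ (with the convention $0^0=1$), where the optimal choice gives $T'=T$ or $T'=T_1$, which lies outside the open range allowed in \eqref{eq:fmA defined bound}. In that case we take $T'$ strictly between $T$ and $T_1$ and let it tend to the endpoint. The factor contributed by the order-zero operator, $f_0^K\cdot((\cdot)^s)^0=f_0^K$, does not depend on $T'$, and the other factor is continuous in $T'$, so the bound passes to the limit. The second point is to verify that $B_l$ maps $B(K,T_1)$ into $B(K,T')$, so that the composition makes sense at the intermediate level. This follows from the finiteness of $\|B_l\|_{K,T_1,T'}$ guaranteed by \eqref{eq:operator norm Am}, since $T<T'<T_1\le T_0$.
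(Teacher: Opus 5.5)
Your proposal is correct and follows the paper's proof essentially verbatim: you split $C_k=\sum_{m+l=k}A_m\circ B_l$ and factor each term through $B(K,T')$ with $T'^{1/s}-T^{1/s}=\frac{m}{m+l}(T_1^{1/s}-T^{1/s})$. You then absorb the leftover factor using $(m^ml^l)^{\sigma-1}\le (k^k)^{\sigma-1}$. Your limiting argument for the endpoint cases $m=0$ or $l=0$, where this choice forces $T'=T$ or $T'=T_1$, is correct and covers a detail the paper leaves implicit.
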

	
	\begin{proof}
		Let $0\leq T< T'< T_1\leq T_0$. Since \eqref{eq:operator norm Am} holds for both $A$ and $B$, we have
		\begin{equation*}
			\|A_m\|_{K,T',T} \leq f_m^K(A) (m^m)^{s+\sigma-1} ((T'^{1/s} - T^{1/s})^{s})^{-m},
		\end{equation*} 
		\begin{equation*}
			\|B_l\|_{K,T_1,T'} \leq f_l^K(B) (l^l)^{s+\sigma-1} ((T_1^{1/s} - T'^{1/s})^{s})^{-l},
		\end{equation*}
		Let us choose $T'$ such that 
		\begin{equation*}
			T_1^{1/s} - T'^{1/s} = (T_1^{1/s} - T^{1/s})\frac{l}{m+l},\quad T'^{1/s} - T^{1/s} = (T_1^{1/s} - T^{1/s})\frac{m}{m+l}.
		\end{equation*}
		Then we obtain
		\begin{equation*}
			\begin{split}
				\|A_m\circ B_l\|_{K,T_1,T} &\leq f_m^K(A) f_l^K(B) (m^m l^l)^{\sigma-1} (m+l)^{(m+l)s} ((T_1^{1/s} - T^{1/s})^s)^{-(m+l)} \\
				&\leq f_m^K(A) f_l^K(B) ((m+l)^{m+l})^{s+\sigma-1} ((T_1^{1/s} - T^{1/s})^s)^{-(m+l)}
			\end{split}
		\end{equation*}
		Since $C_k = \sum_{m+l=k} A_m\circ B_l$, we conclude
		\begin{equation*}
			\|C_k\|_{K,T_1,T} \leq \sum_{m+l=k} f_m^K(A) f_l^K(B) (k^k)^{s+\sigma-1} ((T_1^{1/s} - T^{1/s})^s)^{-k},
		\end{equation*}
		which implies \eqref{eq:fkC bound} by definition.
	\end{proof}
	
	For $\rho>0$ and $K\subset\Omega$ compact, let us follow \cite{Sj82} and set 
	\begin{equation*}
		\|A\|_{K,\rho} = \sum_{m=0}^{\infty} \rho^m f_m^K(A).
	\end{equation*}
	Then \eqref{eq:operator norm Am} holds for some $C_1>0$ if and only if $\|A\|_{K,\rho}<\infty$ for some $\rho>0$.
	
	\begin{lemma}\label{lemma:Bananch algebra}
		Let $T_0>0$ and $K\subset\Omega$ be compact. Let $\rho>0$. If $\|A\|_{K,\rho}<\infty$ and $\|B\|_{K,\rho}<\infty$, and if $C=A\circ B$, then $\|C\|_{K,\rho}<\infty$ and satisfy
		\begin{equation*}
			\|C\|_{K,\rho} \leq \|A\|_{K,\rho} \|B\|_{K,\rho}.
		\end{equation*}
	\end{lemma}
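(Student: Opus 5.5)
The plan is to combine Lemma \ref{lemma:fk composition} with the definition of $\|\cdot\|_{K,\rho}$ and multiply the resulting power series formally. First I check that Lemma \ref{lemma:fk composition} applies. Its hypothesis is that \eqref{eq:operator norm Am} holds for both $A$ and $B$ on $K$ with the given $T_0$. This follows from $\|A\|_{K,\rho}<\infty$: every term of the series is bounded by the sum, so $f_m^K(A)\le \rho^{-m}\|A\|_{K,\rho}$ for all $m$. Together with $(m^m)^{s+\sigma-1}\le (e^m m!)^{s+\sigma-1}$, this gives \eqref{eq:operator norm Am} with $C_1=\max(\|A\|_{K,\rho},1)\cdot\max(\rho^{-1},1)e^{s+\sigma-1}$ after enlarging constants. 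The same argument works for $B$, and we take the larger of the two constants.

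Next, Lemma \ref{lemma:fk composition} gives $f_k^K(C)\le\sum_{m+l=k}f_m^K(A)f_l^K(B)$ for every $k$. I multiply by $\rho^k=\rho^m\rho^l$ and sum over $k$. All terms are nonnegative, so Tonelli (or the Cauchy product for nonnegative series) lets me rearrange freely:
\begin{equation*}
\|C\|_{K,\rho}=\sum_{k}\rho^k f_k^K(C)\le\sum_{k}\sum_{m+l=k}\rho^m f_m^K(A)\,\rho^l f_l^K(B)=\|A\|_{K,\rho}\,\|B\|_{K,\rho}<\infty.
\end{equation*}

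The only real subtlety is the bookkeeping in the first step. Lemma \ref{lemma:fk composition} is stated under the hypothesis \eqref{eq:operator norm Am}, not under finiteness of the $\rho$-norm, so I have to check that finiteness of $\|A\|_{K,\rho}$ implies that hypothesis; the comparison $m^m\le e^m m!$ does this. Beyond that, the proof of Lemma \ref{lemma:fk composition} uses only the bounds defining $f_m^K$, so $f_m^K$ finite for all $m$ is in fact enough. With the hypothesis verified, the remaining step is the Cauchy product of nonnegative series.
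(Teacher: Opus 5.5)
Your proposal is correct and follows the paper's proof: apply Lemma \ref{lemma:fk composition} termwise and sum the resulting Cauchy product of nonnegative series. Your preliminary check that finiteness of $\|A\|_{K,\rho}$ and $\|B\|_{K,\rho}$ gives the hypothesis \eqref{eq:operator norm Am} (via $f_m^K\le\rho^{-m}\|\cdot\|_{K,\rho}$ and $m^m\le e^m m!$) is correct, and it is a detail the paper leaves implicit.
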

	
	\begin{proof}
		By Lemma \ref{lemma:fk composition} we have	
		\begin{equation*}
			\|C\|_{K,\rho} = \sum_{k=0}^\infty \rho^k f_k^K(C) \leq \sum_{k=0}^\infty \sum_{m+l=k} \rho^{m+l} f_m^K(A) f_l^K(B) = \|A\|_{K,\rho} \|B\|_{K,\rho}.
		\end{equation*}
	\end{proof}
	
	Let $\|p\|_{K,\rho} = \|A\|_{K,\rho}$, where $A$ is the associated operator of $p$. If $p$ is a formal $\GG^{s,\sigma}$ symbol on $\Omega$, then for every $K\subset\Omega$ compact \eqref{eq:operator norm Am} holds with some $C_1, T_0>0$ and thus $\|p\|_{K,\rho}<\infty$ for some $\rho>0$. Conversely, if for every $K\subset\Omega$ compact, there is a $\rho>0$ such that $\|p\|_{K,\rho}<\infty$ (which also indicates that there is $T_0>0$ such that \eqref{eq:fmA defined bound} holds), then $p$ is a formal $\GG^{s,\sigma}$ symbol on $\Omega$. Therefore, in view of Lemma \ref{lemma:Bananch algebra} and Remark \ref{remark:composition}, we obtain	
	\begin{coro}\label{coro:symbol Banach algebra}
		If $p$ and $q$ are formal $\GG^{s,\sigma}$ symbols on $\Omega$, then so is $p\sharp q$, and
		\begin{equation*}
			\|p\sharp q\|_{K,\rho} \leq \|p\|_{K,\rho} \|q\|_{K,\rho} .
		\end{equation*}
	\end{coro}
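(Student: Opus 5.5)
The statement to prove is Corollary \ref{coro:symbol Banach algebra}. My plan is to transfer everything to the associated infinite order operators and then invoke Lemma \ref{lemma:Bananch algebra}.

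Let $A$ and $B$ be the operators associated to $p$ and $q$ via \eqref{eq:operator A} and \eqref{eq:operator Am}. Fix a compact $K\subset\Omega$. Since $p$ and $q$ are formal $\GG^{s,\sigma}$ symbols, the discussion leading to \eqref{eq:operator norm Am} gives constants $C_1$ and $T_0$ for which \eqref{eq:operator norm Am} holds for both $A_m$ and $B_m$. For this I take the larger of the two constants $C_1$ and the smaller of the two $T_0$'s; this is harmless because shrinking $T_0$ only restricts the range $0\le T<T_1\le T_0$. Then I fix the same $T_0$ in the definition \eqref{eq:fmA defined bound} of $f^K$ for $A$, $B$ and $C=A\circ B$.

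The key step is to compare $m!^{s+\sigma-1}$ with $(m^m)^{s+\sigma-1}$. Since $m!\le m^m$, the bound \eqref{eq:operator norm Am} gives $f^K_m(A)\le C_1^{1+m}$, and likewise for $B$. Hence $\|A\|_{K,\rho}$ and $\|B\|_{K,\rho}$ are finite for all $\rho<C_1^{-1}$.

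By Remark \ref{remark:composition}, $C=A\circ B$ is the operator associated to $r=p\sharp q$. Lemma \ref{lemma:Bananch algebra} then yields
\[
\|p\sharp q\|_{K,\rho}=\|C\|_{K,\rho}\le\|A\|_{K,\rho}\|B\|_{K,\rho}=\|p\|_{K,\rho}\|q\|_{K,\rho}.
\]
This is the claimed inequality. It holds for every $\rho$ with the right-hand side finite, with all three norms taken with respect to the common $T_0$.

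It remains to show that $p\sharp q$ is a formal $\GG^{s,\sigma}$ symbol. Finiteness of $\|C\|_{K,\rho}$ gives $f_k^K(C)\le \|C\|_{K,\rho}\rho^{-k}$. Using Stirling's bound $k^k\le e^k k!$, I get $\|C_k\|_{K,T_1,T}\le C_2^{1+k}k!^{s+\sigma-1}\bigl((T_1^{1/s}-T^{1/s})^s\bigr)^{-k}$, which is \eqref{eq:operator norm Am} for $C$. The converse argument following \eqref{eq:operator norm Am} then applies, since $r_k=C_k(1)$ and $N_{s,\sigma}(1,T_1)=1$. It gives the Gevrey bound $\sup_K|\partial_x^\alpha\partial_\xi^\beta r_k|\le C^{1+k}k!^{s+\sigma-1}R^{|\alpha|+|\beta|}\alpha!^s\beta!^\sigma$ on each compact $K$. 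The exponent on $\beta!$ is $\sigma$, not $s$; to keep it I would apply Lemma \ref{lemma:Gevrey function seminorms} directly to $N_{s,\sigma}$ rather than the displayed crude bound. As $K$ is arbitrary, $r$ is a formal $\GG^{s,\sigma}$ symbol.

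The main obstacle is purely bookkeeping, namely making $T_0$ uniform for $A$ and $B$. Lemma \ref{lemma:fk composition} needs an intermediate $T'$ with $T<T'<T_1\le T_0$ inside the common range. The degenerate splits $m=0$ or $l=0$ give $T'=T$ or $T'=T_1$. They are handled by the convention $0^0=1$, with $\|A_0\|_{K,T,T}$ bounded by the multiplication estimate \eqref{eq:Gevrey multiplication}.
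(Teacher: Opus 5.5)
Your proposal is correct and follows the paper's route: pass to the associated operators, identify $A\circ B$ with the operator of $p\sharp q$ via Remark~\ref{remark:composition}, apply Lemma~\ref{lemma:Bananch algebra}, and use the converse characterization (finite $\|\cdot\|_{K,\rho}$ implies a formal $\GG^{s,\sigma}$ symbol) to conclude. Your extra bookkeeping fills in details the paper leaves implicit: the common $T_0$, the comparisons $m!\le m^m\le e^m m!$, the degenerate splits in Lemma~\ref{lemma:fk composition}, and keeping $\beta!^\sigma$ instead of the paper's typo $\beta!^s$.
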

	
	\medskip
	
	We are now ready to prove Theorem \ref{thm:main}. From the standard parametrix construction, we know that $q=\sum_{k=0}^{\infty} h^k q_k$ exists as a formal classical symbol. Moreover, let $q_0=\frac{1}{p_0}$, 
	\[
	p\sharp q_0 = 1 - r,\quad r=\sum_{k=1}^{\infty} h^k r_k(x,\xi).
	\]
	Then $q=q_0\sharp (1+r+r\sharp r + \cdots)$ (a finite sum in each power of $h$) in the asymptotic sense. It remains to show that $q$ is a formal $\GG^{s,\sigma}$ symbol on $\Omega$. From Corollary \ref{coro:symbol Banach algebra}, we see that $r$ is a formal $\GG^{s,\sigma}$ symbol. Let us fix any $K\subset\Omega$ compact. There is $T_0>0$ such that \eqref{eq:fmA defined bound} holds for the operator $R=\sum_{k=0}^\infty R_k$ associated to $r$. Since $r$ is a symbol of order $-1$, i.e. $r_0=0$, we have $R_0=0$ and so $f_0^K(R)=0$, and
	\[
	\|r\|_{K,\rho} = \sum_{k=0}^\infty \rho^k f_k^K(R) = \rho \sum_{k=0}^\infty \rho^k f_{k+1}^K(R) = \OO(\rho),\quad 0<\rho\leq\rho_0,
	\]   
	for some $\rho_0>0$. We set $\rho>0$ sufficiently small so that $\|r\|_{K,\rho}<1/2$. Then
	\[
	\|\underbrace{r\sharp\cdots\sharp r}_{m}\|_{K,\rho}\leq 2^{-m}.
	\]
	We obtain therefore $\|1+r+r\sharp r + \cdots\|_{K,\rho}\leq 2$. As a result, $(1+r+r\sharp r + \cdots)$ is a formal $\GG^{s,\sigma}$ symbol, since $q_0=1/p_0\in\GG_x^s\GG_\xi^\sigma(\Omega)$ is obviously a formal $\GG^{s,\sigma}$ symbol, we conclude that $q=q_0\sharp (1+r+r\sharp r + \cdots)$ is a formal $\GG^{s,\sigma}$ symbol. This completes the proof of Theorem \ref{thm:main}.  
	
	\section{Adiabatic projectors}\label{sec:adiabatic}
	
	In this section we apply Theorem \ref{thm:main} to adiabatic theory and recover exponential estimates for adiabatic expansions. We also obtain exponential estimates for a class of frequency-filtered adiabatic evolution problems. Let us start with a brief review of adiabatic evolution.
	
	Let $\HH$ be a separable complex Hilbert space.
	Let $P(t) : \HH\to\HH$, $t\in [0,1]$, be a family of closed densely defined unbounded operators with domain $\DD=\DD(t)\subset\HH$ independent of $t$ such that
	\begin{enumerate}
		\item The map $t\mapsto P(t)\in \LL(\DD,\HH)$ is of $\CI$ class in the natural sense: the successive derivatives
		are uniformly bounded $\DD\to\HH$ in $t\in[0,1]$ and again differentiable; 
		\item There exist two continuous curves $\gamma_j: [0,1]\to\RR$, $j=1,2$, with $\gamma_1(t)<\gamma_2(t)$, $\gamma_j(t)\notin \Spec(P(t))$, $j=1,2$, $t\in [0,1]$.
	\end{enumerate}
	Let $\Sigma_0(t) = [\gamma_1(t),\gamma_2(t)]\cap \Spec(P(t))$. Then the spectral projector $\Pi_0(t)={\mathbf 1}_{\Sigma_0(t)}(P(t))$ is of class $\CI([0,1];\LL(\HH,\DD))$, and satisfies
	\begin{equation}\label{eq:Pi_0}
		\Pi_0(t) = \frac{1}{2\pi i} \int_{\Gamma_0(t)} (z-P(t))^{-1} \,dz,
	\end{equation}
	where $\Gamma_0$ is the circle symmetric with respect to the real axis and passing through the points $\gamma_j(t)$, $j=1,2$.
	
	We are concerned about adiabatic evolution equations of the form
	\begin{equation}
		(hD_t + P(t))u(t) = 0,\quad 0\leq t\leq 1,
	\end{equation}
	for small $h>0$, and their solutions close to the image of $\Pi_0(t)$. To this end, we view $hD_t + P(t)$ as a vectored valued (pseudo)differential operator with symbol $\tau+P(t)$. Let us recall from the standard parametrix construction for elliptic pseudodifferential operators that there is a unique formal classical symbol taking values in $\LL(\HH,\DD)$:
	\begin{equation*}
		S=S(t,\tau-z;h) \sim S_0(t,\tau-z) + h S_1(t,\tau-z) + \cdots + h^k S_k(t,\tau-z) + \cdots
	\end{equation*}
	for $t\in[0,1]$, $z-\tau\in\CC\setminus\Spec(P(t))$, such that
	\begin{equation}\label{eq:S as inverse symbol}
		(z-\tau-P(t)) \sharp S \sim \text{id}_{\HH},\quad S\sharp (z-\tau-P(t)) \sim \text{id}_{\DD},
	\end{equation}
	and we note that $S_0(t,\tau-z) = (z-\tau-P(t))^{-1}$.
	
	For $\tau\in\text{neigh}\,(0;\CC)$, we set the formal classical symbol 
	\begin{equation}
		\Pi(t,\tau;h) = \frac{1}{2\pi i} \int_{\Gamma_0(t)} S(t,\tau-z;h) \,dz,
	\end{equation}
	with $\Gamma_0(t)$ as in \eqref{eq:Pi_0}. It is shown in \cite{Sj93} that $\Pi(t,\tau;h)$ is independent of $\tau$ due to the holomorphicity of $S(t,\tau-z;h)$ in $z$. Therefore,
	\begin{equation*}
		\Pi(t,hD_t;h)u = \Pi(t;h)u(t),
	\end{equation*}
	where $\Pi(t;h) \sim \sum_{j=0}^{\infty} h^j \Pi_j(t)$ is given by
	\begin{equation}\label{eq:Pi_j}
		\Pi_j(t) = \frac{1}{2\pi i} \int_{\Gamma_0(t)} S_j(t,-z) \,dz\in\CI([0,1];\LL(\HH,\DD)),\quad j=0,1,\ldots.
	\end{equation}
	Furthermore, it is proved in \cite{Sj93} that 
	\begin{equation}\label{eq:projector prop}
		\Pi(t;h)\circ\Pi(t;h) \sim \Pi(t;h),
	\end{equation}
	\begin{equation}\label{eq:projector commute}
		[hD_t + P(t), \Pi(t;h)] \sim 0.
	\end{equation}
	
	We remark that one can deduce the growth of $\|\Pi_j(t)\|_{\LL(\HH,\DD)}$ in $j$ from the parametrix of $z-(hD_t+P(t))$, more precisely, from properties of the formal symbol $S(t,\tau-z;h)$ in \eqref{eq:S as inverse symbol}. For instance, using results of \cite{BoKr} in the form given in \cite[Chapter 1]{Sj82}, Sj\"ostrand considered in \cite{Sj93} the case where $t\mapsto P(t)$ extends to a holomorphic (operator-valued) function in a complex neighborhood of $[0,1]$ and obtained that $S(t,\tau-z;h)$ is then a formal classical analytic operator-valued symbol. It follows that operators $\Pi_j(t)$ are holomorphic in a complex neighborhood $V$ of $[0,1]$ satisfying that for some $C>0$,
	\begin{equation}
		\sup_{t\in V} \|\Pi_j(t)\|_{\LL(\HH,\DD)}  \leq C^{j+1} j!.
	\end{equation}
	This estimate was also obtained in \cite{Ne93} and \cite{JoPf}. For the discussion of the optimal constant $C$, we refer to \cite{Ma94}.
	
	Using Theorem \ref{thm:main}, we extend the above estimates to the Gevrey cases:
	
	\noindent
	{\bf Example 1.} Let us assume that $t\mapsto P(t)$ is of Gevrey-$s$ ($s>1$) class, that is
	\begin{equation*}
		\exists C>0,\quad\left\|P^{(k)}(t)\right\|_{\LL(\DD,\HH)} \leq C^{k+1} k!^s,\quad t\in [0,1].
	\end{equation*}
	Then in our notation, we have $z-\tau-P(t)\in\GG_t^s \GG_\tau^1$ is elliptic provided that $t\in [0,1]$, $z-\tau\in\CC\setminus\Spec(P(t))$. By Theorem \ref{thm:main}, we deduce that $S=\sum_{j=0}^\infty h^j S_j(t,\tau-z)$ is a formal $\GG^{s,1}$ symbol, and therefore, for some $C>0$ we have
	\begin{equation}\label{eq:adiabatic est Gevrey}
		\sup_{t\in [0,1]} \|\Pi_j(t)\|_{\LL(\HH,\DD)}  \leq C^{j+1} j!^s.
	\end{equation}
	This recovers results of Nenciu \cite{Ne93}. We note that our arguments to obtain \eqref{eq:adiabatic est Gevrey} was predicted by Sj\"ostrand in \cite{Sj93}, where he suggested using Bontet de Monvel--Kr\'ee's theory \cite{BoKr} for Gevrey parametrix in the form similar as in \cite[Chapter 1]{Sj82}. 
	
	\noindent
	{\bf Example 2.} Let $\sigma>1$. Let $a\in \GG^\sigma(
	\RR)\cap\CIc(\RR)$ be such that $a=0$ on $(-\infty,0]\cup [1,\infty)$ and $a>0$ in $(0,1)$. Let $P(t)\in\GG_b^s([0,1];\LL(\DD,\HH))$ be the same as in Example 1.
	We consider the frequency-filtered adiabatic evolution equation:
	\begin{equation}\label{eq:freq filtered adiabatic}
		(a(hD_t)+P(t))u(t)=0,\quad 0\leq t\leq 1.
	\end{equation}
	For $t\in [0,1]$, $z-a(\tau)\in\CC\setminus\Spec(P(t))$, the operator-valued symbol $z-a(\tau)-P(t)$, in our notation, is an elliptic $\GG_t^s\GG_\tau^\sigma$ symbol. We obtain therefore by Theorem \ref{thm:main} that its parametrix $S=\sum_{j=0}^\infty h^j S_j$ is a formal $\GG^{s,\sigma}$ symbol, which implies, for some $C>0$,
	\begin{equation}
		\sup_{t\in [0,1]} \|\Pi_j(t)\|_{\LL(\HH,\DD)}  \leq C^{j+1} j!^{s+\sigma-1},
	\end{equation}
	As a consequence, our asymptotic projection $\Pi(t;h)\sim \sum_{j=0}^\infty h^j \Pi_j(t)$ satisfies \eqref{eq:projector prop} and \eqref{eq:projector commute} with a quantitative remainder estimate if we truncate the asymptotic optimally, namely an error of size $\OO (e^{-C^{-1}h^{-1/(s+\sigma-1)}})$. We can therefore obtain almost invariant spectral subspaces with fractional exponential errors for the model 
	\eqref{eq:freq filtered adiabatic} as in \cite{Ne93}, which can be viewed as a time-dispersive generalisation of the standard adiabatic theory.

\end{document}